\documentclass[11pt,bezier]{article}
\pdfoutput=1
\usepackage{amsmath}
\usepackage{amsthm, thm-restate}
\usepackage{amsfonts,amsthm,amssymb}
\usepackage{enumitem}
\usepackage{amsfonts}
\usepackage{graphics}
\usepackage{graphicx}
\usepackage{epstopdf}
\usepackage{subfigure}
\usepackage{cite}
\usepackage{amssymb}
\usepackage{mathtools}

\newtheorem{lemma}{Lemma}[section]
\newtheorem{corollary}{Corollary}[section]

\newtheorem{conjecture}{Conjecture}[section]

\newtheorem*{remark*}{Remark}
\theoremstyle{definition}

\begin{document}
	\title{Hamiltonian cycles and Hamiltonian paths in $2k$-connected, $1$-tough and $(P_{3}\cup kP_{1})$-free graphs\footnote{The research is supported by Natural Science Foundation of Xinjiang Uygur Autonomous Region (2025D01E02) and National Natural Science Foundation of China (12261086).}}
	\author{Hui Liu, Yingzhi Tian\footnote{Corresponding author. E-mail: huiliu2192000@163.com (H. Liu), tianyzhxj@163.com (Y. Tian).} \\
		{\small College of Mathematics and System Sciences, Xinjiang
			University, Urumqi, Xinjiang, 830046, PR China}}

	\date{}
	
	\maketitle
	
	\noindent{\bf Abstract } A graph $G$ is called Hamiltonian if it possesses a Hamiltonian cycle; and $G$ is called Hamiltonian-connected if it contains a Hamiltonian path between any two distinct vertices. The toughness of a non-complete graph is the minimum ratio of $|S|$ to the number of components of $G-S$ for any cutset $S$. For a given graph $H$, a graph $G$ is called $H$-free if $G$ does not contain $H$ as an induced subgraph. In this paper, for an integer $k\ge 2$, we prove that every $2k$-connected, $1$-tough and $(P_{3}\cup kP_{1})$-free graph is Hamiltonian and every $(2k+1)$-connected $(P_{3}\cup kP_{1})$-free graph with toughness greater than $1$ is Hamiltonian-connected.
	
	\noindent{\bf Keywords:} Toughness; Hamiltonian path; Hamiltonian cycle; $(P_{3}\cup kP_{1})$-free graphs

	\section{Introduction}
	The interconnection network of a parallel or distributed system is commonly modeled as an undirected graph $G=(V(G),E(G))$. A primary goal in network design is to ensure efficient communication. Hamiltonian cycles and Hamiltonian paths are among the most desirable topological properties in such networks, as they provide efficient communication schemes and fault-tolerant strategies. A Hamiltonian cycle in a graph $G$ is a cycle that contains every vertex of $G$. A graph $G$ is called Hamiltonian if it admits a Hamiltonian cycle. A Hamiltonian path in $G$ is a path that traverses every vertex of $G$ exactly once. A graph $G$ is called Hamiltonian-connected if for every pair of distinct vertices $u,v\in V(G)$, there exists a Hamiltonian path with endpoints $u$ and $v$. Hamiltonian properties are of fundamental importance in graph theory. As classic structural properties, they link local conditions (such as degree, connectivity, or toughness) to global traversability, thereby revealing deep insights into the architecture of graphs. This significance extends naturally to many practical domains. In network design, a Hamiltonian cycle provides a ring topology that supports efficient broadcasting and also guarantees a simple, fault-tolerant routing scheme where data can be transmitted along a cycle with built-in redundancy. In algorithmic graph theory, the Hamiltonian cycle problem is a classic NP-complete problem; the search for polynomial-time sufficient conditions has driven research in combinatorial optimization, approximation algorithms, and parameterized complexity for decades. In operations research, the traveling salesman problem, which asks for a minimum weighted Hamiltonian cycle in a weighted complete graph, has direct applications in logistics, delivery route planning, and supply chain management. In electronic design automation, Hamiltonian paths are used to optimize the drilling order on printed circuit boards. These diverse applications explain why Hamiltonian properties remain a vibrant research topic. In recent years, research on Hamiltonian graphs and Hamiltonian-connected graphs has progressed along several major directions in graph theory. One active direction explores the connection between toughness and spectral radius of a graph\cite{Chen,Liu,LiuF,Lou}. Another important direction is the classification of forbidden induced subgraphs that force Hamiltonian properties\cite{Broersma and Patel,Broersma,Gao,Li,Matthews,Ota and Sanka,Ota and SankaP1,Shan,Shan p2p3,Xu}. A third line of inquiry focuses on degree sequence conditions for Hamiltonian properties in tough graphs\cite{Hoang,HoangRobin,ShanTan}. In this paper, given a forbidden induced subgraph condition together with assumptions on toughness and connectivity, we investigate the existence of Hamiltonian cycles and Hamiltonian paths in such graphs.
	
	The connectivity $\kappa(G)$ of a graph $G$ is the minimum size of a vertex set $S\subseteq V(G)$ such that $G-S$ is disconnected or has only one vertex. A graph is $k$-connected if $\kappa(G)\ge k$. For a non-complete graph $G$, the toughness $\tau(G)$ is defined as
	\[
	\tau(G)=\min\left\{\frac{|S|}{c(G-S)}: S\subseteq V(G),\; c(G-S)\ge 2\right\},
	\]
	where $c(G-S)$ denotes the number of components of $G-S$. For complete graphs we set $\tau(G)=\infty$. A graph is called $t$-tough if $\tau(G)\ge t$. It is easy to see that every Hamiltonian graph is $1$-tough, and for any non-complete graph, $\kappa(G)\ge 2\tau(G)$.
	
		In 1973, Chv\'{a}tal \cite{Chvatal} proposed the following conjecture.
	\begin{conjecture}(Chv\'{a}tal \cite{Chvatal})
		There exists a constant $t_{0}>0$ such that every $t_{0}$-tough graph on at least three vertices is Hamiltonian.
	\end{conjecture}
	
	Bauer, Broersma and Veldman\cite{Bauer} proved that for arbitrary $\epsilon >0$, there exists a $(\frac{9}{4}-\epsilon )$-tough graph without a Hamiltonian path. Thus, if Conjecture $1.1$ is true, then such a constant $t_{0}$ must be at least $\frac{9}{4}$. Nevertheless, despite the uncertainty about the exact value of $t_0$, Conjecture $1.1$ has been proved for various graph classes. For instance, Matthews and Sumner~\cite{Matthews} proved that every $2$-connected claw-free graph with minimum degree at least $\frac{1}{3}(n-2)$ is Hamiltonian. Gerlach~\cite{Gerlach} established that every sep-chordal planar graph with toughness greater than one is Hamiltonian. For $2K_2$-free graphs, a series of improvements on the toughness bound has been obtained: Broersma, Patel and Pyatkin~\cite{Broersma and Patel} first showed that every $25$-tough $2K_2$-free graph on at least three vertices is Hamiltonian; Shan~\cite{Shan} then reduced the toughness requirement to $3$; and finally Ota and Sanka~\cite{Ota and Sanka} further lowered it to $2$. In addition, Shan~\cite{Shan p2p3} demonstrated that every $15$-tough $(P_2\cup P_3)$-free graph on at least three vertices is Hamiltonian. For $(P_2\cup kP_1)$-free graphs, Xu, Li and Zhou~\cite{Xu} proved that every $1$-tough $2k$-connected $(P_2\cup kP_1)$-free graph is Hamiltonian, while Ota and Sanka~\cite{Ota and SankaP1} established that every $1$-tough $k$-connected $(P_2\cup kP_1)$-free graph with minimum degree at least $\frac{3(k-1)}{2}$ is either Hamiltonian or the Petersen graph. Li, Broersma and Zhang~\cite{Li} showed that every $1$-tough $(P_3\cup P_1)$-free graph is Hamiltonian. Gao and Shan~\cite{Gao} further proved that every $7$-tough $(P_3\cup 2P_1)$-free graph is Hamiltonian. Despite these partial results, Conjecture 1.1 remains open. Motivated by these developments, in this paper we consider $(P_3\cup kP_1)$-free graphs under given conditions on connectivity and toughness, and we obtain the following results.
	
	\begin{restatable}{theorem}{thmOne}
		For an integer $k\ge 2$, if $G$ is a $2k$-connected, $1$-tough and  $(P_{3}\cup kP_{1})$-free graph, then $G$ is Hamiltonian.
	\end{restatable}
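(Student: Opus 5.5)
We argue by contradiction, using a longest-cycle argument in the style of the proofs for $(P_2\cup kP_1)$-free graphs by Xu, Li and Zhou. Suppose $G$ is $2k$-connected, $1$-tough, $(P_3\cup kP_1)$-free and not Hamiltonian. Since $G$ is $2$-connected it contains a cycle. Among all longest cycles, choose $C$ together with a component $H$ of $G-V(C)$ so that $|V(H)|$ is as small as possible. We fix an orientation of $C$. For $x\in V(C)$ we write $x^+$ and $x^-$ for its successor and predecessor, and $A^+=\{x^+: x\in A\}$. Let $N_C(H)=\{x_1,\dots,x_m\}$ be the attachment vertices on $C$, listed in cyclic order.

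\textbf{Step 1 (structure of $H$ and $N_C(H)$).} Since $G$ is $2k$-connected and $C$ is not Hamiltonian, $N_C(H)$ separates $H$ from the rest of $C$. Hence $m\ge 2k$.

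The standard exchange arguments for a longest cycle then give the following facts:
\begin{itemize}
\item $N_C(H)^+$ is an independent set.
\item No vertex of $N_C(H)^+$ has a neighbour in $H$.
\item For $i\neq j$ there is no $(x_i^+,x_j^+)$-path internally disjoint from $C\cup H$.
\item The usual crossing conditions hold: if $x_i^+x_j^{+}\in E$ is forbidden, so are edges like $x_i^{+}x_j^{++}$ in the relevant configurations.
\end{itemize}

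Next we show that $H$ is a single vertex $h$. If $|V(H)|\ge 2$, pick an edge $uv$ in $H$ together with a neighbour of it on $C$. This yields an induced $P_3$ of the form $w\,u\,v$ or $u\,v\,w$ inside $H\cup\{x_i\}$, and we may take $k$ vertices from $N_C(H)^+$ nonadjacent to it. These form an induced $P_3\cup kP_1$, a contradiction. Care is needed to choose the $P_3$ so that its vertices are nonadjacent to the chosen successors. Since $m\ge 2k$ and $N_C(H)^+$ is independent and has no neighbours in $H$, we have enough freedom to do this; a vertex of $C$ can be adjacent to at most one of the successors, otherwise the exchange argument produces a longer cycle. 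The minimality of $|V(H)|$ is used to clean up this step.

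\textbf{Step 2 (the key $P_3$).} Now $H=\{h\}$ and $d(h)\ge 2k$. Consider the induced path $x_1^+\,x_1\,h$ (it is induced because $x_1^+h\notin E$). The set $\{x_2^+,\dots,x_m^+\}$ is independent, has size at least $2k-1\ge k$, and none of its vertices is adjacent to $h$. By $(P_3\cup kP_1)$-freeness, at least $m-k$ of these vertices must be adjacent to $x_1$ or to $x_1^+$. The second option is impossible because $N_C(H)^+$ is independent. Therefore $x_1$ is adjacent to at least $m-k\ge k$ of the vertices $x_j^+$. The same holds for every $x_i$, and symmetrically, using $x_i^-$, for predecessors.

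\textbf{Step 3 (contradiction via toughness or the cycle).} Having many edges of the form $x_ix_j^+$ allows cycle rerouting. Take $x_ix_j^+\in E$ together with $x_jx_l^+$-type edges in a crossing pattern; these produce a cycle through $h$ longer than $C$. When no crossing pattern exists, the edges $x_ix_j^+$ are all "nested". In that case let $S=N_C(h)$. Removing $S$ leaves $h$ and the segments of $C$ between consecutive $x_i$, which should give $c(G-S)\ge |S|+1$ and contradict $1$-toughness. More directly, we compare the two bounds $m\ge 2k$ and "each $x_i$ misses at most $k-1$ of the successors" in a counting argument over pairs $(i,j)$: there are at least $m(m-k)$ such edges. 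Among $m$ cyclically ordered attachments, a noncrossing family has at most about $2m$ chords, while $m(m-k)\ge mk>2m$ once $k\ge 3$. The case $k=2$ needs the toughness hypothesis explicitly.

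The main obstacle is Step 3, namely turning the abundance of edges $x_ix_j^+$ into a longer cycle. Not every such edge gives a longer cycle by itself: an edge $x_ix_j^+$ together with $h$ yields the cycle $h\,x_i\,\overleftarrow{C}\,x_j^+\,x_i^{+}\ldots$ only when combined with a second chord. I would first try the classical "two crossing chords $x_ix_j^+$ and $x_jx_i^+$" argument. If that fails, I would fall back on $1$-toughness by taking $S=N_C(h)\cup\{\text{vertices absorbing the chords}\}$.
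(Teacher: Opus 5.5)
Your Step 2 is correct and matches the paper's Claim 3. The steps before and after it do not hold up.

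\textbf{Step 3 does not work.} Edges of the form $x_ix_j^+$, crossing or not, never lengthen $C$ by themselves. Deleting $x_ix_i^+$ and $x_jx_j^+$ from $C$ leaves two arcs, with ends $\{x_i^+,x_j\}$ and $\{x_j^+,x_i\}$. Each chord $x_ix_j^+$, $x_jx_i^+$ joins the two ends of the \emph{same} arc, and the route through $H$ joins $x_i$ to $x_j$. Closing a longer cycle would therefore need the forbidden edge $x_i^+x_j^+$.

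A concrete example settles this. Take $K_{n,n+1}$ with $n\ge 2k$. It is $2k$-connected, it is $(P_3\cup kP_1)$-free because every induced $P_3$ dominates it, and it is not Hamiltonian. For a longest cycle $C$ and the leftover vertex, every $x_i$ is adjacent to every $x_j^+$, yet $C$ cannot be extended. So your counting argument for $k\ge 3$ would prove a false statement. The $1$-toughness hypothesis must be used in an essential way for every $k$, not only $k=2$.

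The same example refutes your Step 1 claim that a vertex of $C$ is adjacent to at most one successor. That claim also contradicts your own Step 2. Hence the reduction to $|V(H)|=1$ is unproved. It is not needed, though: Step 2 works for an arbitrary component $H$.

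\textbf{The missing part is the whole toughness argument.} You leave it as ``$N_C(h)\cup\{\text{vertices absorbing the chords}\}$''. $N_C(H)$ alone is not a usable cut set, since arcs between consecutive attachment vertices can be linked through other vertices of $C$ with many neighbours in $N_C(H)^+$. In the paper's notation ($H_C^+=N_C(H)^+$, $h=m$), the paper takes $S$ to be the set of all vertices of $C$ with at least $h-k+1$ neighbours in $H_C^+$; it contains $H_C$ by Claim 3. It then proves four facts:
\begin{enumerate}
\item[(a)] Every vertex of $C$ has either at most one or at least $h-k+1$ neighbours in $H_C^+$. This comes from an induced path $x_a^+wx_b^+$ plus $k$ non-neighbours.
\item[(b)] No two consecutive vertices $w,w^+$ of $C$ both lie in $S$. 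Such a pair would share at least $h-2k+2\ge 2$ successor neighbours. An edge $wx_a^+$ together with an edge $w^+x_b^+$, where $x_b^+$ lies on the arc from $x_a^+$ to $w$, gives a longer cycle. This is the crossing argument that does work: it pairs chords at two consecutive vertices of $C$, not at attachment vertices.
\item[(c)] Every maximal stretch of $C-S$ is a clique with at most one neighbour in $H_C^+$.
\item[(d)] Distinct stretches are neither adjacent nor linked through another component of $G-C$. This is shown by further rerouting through these cliques.
\end{enumerate}
Stretches and vertices of $S$ alternate on $C$, and $H$ is its own component of $G-S$. Together these give $c(G-S)\ge |S|+1$, which contradicts $1$-toughness. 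None of (a)--(d) appears in your proposal, and they are where the proof actually lives.
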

	
	Let $G$ be a $(P_3\cup 2P_1)$-free graph. If $G$ is $7$-tough, then $G$ is $14$-connected and $1$-tough. Consequently, $G$ satisfies the hypothesis of Theorem 1.1 with $k=2$ (i.e., $G$ is $4$-connected and $1$-tough), and therefore $G$ is Hamiltonian. Thus the main result in \cite{Gao} is a corollary of Theorem 1.1.
	
	\begin{corollary}(Gao and Shan~\cite{Gao})
		Every $7$-tough $(P_3\cup 2P_1)$-free graph on at least three vertices is Hamiltonian.
	\end{corollary}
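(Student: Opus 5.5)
The plan is to derive the corollary directly from Theorem 1.1 with $k=2$, splitting into the complete and non-complete cases. The split is needed because the toughness--connectivity relation $\kappa(G)\ge 2\tau(G)$ is only available for non-complete graphs, and a small complete graph need not be $4$-connected.

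Let $G$ be a $7$-tough $(P_3\cup 2P_1)$-free graph on $n\ge 3$ vertices.

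\emph{Complete case.} If $G$ is complete, then $G=K_n$ with $n\ge 3$, and $v_1v_2\cdots v_nv_1$ is a Hamiltonian cycle. So $G$ is Hamiltonian and nothing more is needed.

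\emph{Non-complete case.} Suppose $G$ is not complete. Then $\tau(G)$ is a genuine minimum over cutsets, and the inequality $\kappa(G)\ge 2\tau(G)$ recalled in the introduction applies. This gives $\kappa(G)\ge 2\cdot 7=14\ge 4=2k$ for $k=2$, so $G$ is $4$-connected. Also $\tau(G)\ge 7\ge 1$, so $G$ is $1$-tough. Since $P_3\cup 2P_1$ is exactly $P_3\cup kP_1$ with $k=2$, all hypotheses of Theorem 1.1 hold for $k=2$. Theorem 1.1 then shows that $G$ is Hamiltonian.

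The only step needing care is the separation of the complete case, and it is immediate. All the substantive work is contained in Theorem 1.1.
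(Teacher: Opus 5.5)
Your proposal is correct and follows the paper's route: the paper also observes that a $7$-tough graph is $14$-connected (via $\kappa(G)\ge 2\tau(G)$) and $1$-tough, and then applies Theorem 1.1 with $k=2$. Your separate treatment of the complete case adds care the paper omits. The paper's claim of $14$-connectivity fails for $K_n$ with $n\le 14$, and $K_3$, $K_4$ are not even $4$-connected, whereas $K_n$ with $n\ge 3$ is trivially Hamiltonian.
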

	
	\begin{restatable}{theorem}{thmTwo}
		For an integer $k\ge 2$, if $G$ is a $(2k+1)$-connected $(P_{3}\cup kP_{1})$-free graph with toughness greater than $1$, then $G$ is Hamiltonian-connected.
	\end{restatable}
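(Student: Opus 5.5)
The plan is to argue by contradiction using a longest path between two fixed vertices, mirroring the cycle argument for Theorem 1.1. Fix distinct $x,y\in V(G)$ and suppose there is no Hamiltonian $(x,y)$-path. Among all $(x,y)$-paths, choose $P$ longest, and subject to this choose it so that a component $H$ of $G-V(P)$ is as small as possible. Orient $P$ from $x$ to $y$, and write $u^+$ and $u^-$ for the successor and predecessor of $u$ on $P$. Let $N_P(H)=\{u_1,\dots,u_m\}$ be the attachment vertices of $H$ on $P$, listed in order along $P$. Since $G$ is $(2k+1)$-connected, $m\ge 2k+1$, and at least $2k-1$ of the $u_i$ are different from $x$ and $y$, so they have both a predecessor and a successor on $P$.

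\textbf{Step 1 (standard exchange facts).} By maximality of $P$, no successor $u_i^+$ is adjacent to $H$, and no two successors $u_i^+,u_j^+$ are adjacent. Otherwise rerouting through $H$ gives a longer $(x,y)$-path; note that here no closing edge is needed, because the path endpoints stay fixed. The same statements hold for predecessors. We also want crossing statements, such as $u_i^+u_j^-\notin E(G)$ for suitable $i<j$, together with their variants. Hence $S^+=\{u_i^+: u_i\ne y\}$ is an independent set of size at least $m-1\ge 2k$, and the same is true for $S^-$.

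\textbf{Step 2 (using $(P_3\cup kP_1)$-freeness).} Take $h\in H$ adjacent to $u_i$. Then $h u_i u_i^{+}$, or else a path $h' h u_i$ inside $H\cup\{u_i\}$, is an induced $P_3$ in favourable cases. Any $k$ vertices of $S^+$ (or of $S^-$) that are non-adjacent to this $P_3$ would then give an induced $P_3\cup kP_1$. So every such $P_3$ dominates all but at most $k-1$ vertices of $S^+\setminus\{u_i^+\}$. Since $h$ and $u_i^+$ have no neighbours in $S^+$ (apart from adjacencies already excluded), the vertex $u_i$ must be adjacent to at least $|S^+|-k\ge k$ successors $u_j^+$. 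I then aim for a contradiction via an exchange. If $u_iu_j^+\in E$ and in addition $u_i^+$ or some related vertex reaches across, we can form a longer path, for example $x\cdots u_i h\cdots u_j u_j^- \cdots u_i^+ \,\cdot$ (reversed segment) $\cdots$. This requires combining an edge $u_iu_j^+$ with an edge between the segment ends. To obtain that second edge, apply freeness again to the $P_3$ given by $u_j^+u_i u_i^+$ or $u_i^- u_i u_i^+$.

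\textbf{Step 3 (toughness).} If the local exchanges fail to give a contradiction directly, use the condition $\tau(G)>1$. Consider $T=N_P(H)$ together with the components of $G-T$. The vertex set $H$, and each vertex of $S^+$ in its own component (or grouped suitably), gives $c(G-T)\ge |S^+|+1\ge m$ roughly. The exchange facts ensure the successors lie in different components, and $y$'s segment adds one more. Then $|T|/c(G-T)\le 1$, contradicting $\tau(G)>1$. The main obstacle is exactly this separation: showing that different $u_i^+$ lie in distinct components of $G-T$. Paths between them through other vertices of $P$ are not excluded by the plain exchange facts. I would first try to use the $(P_3\cup kP_1)$-freeness count from Step 2, combined with $m\ge 2k+1$, to force a dense adjacency pattern. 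Either that pattern yields a longer path directly, or it shows that the segments between consecutive $u_i$ collapse so that the cut argument applies. The extra connectivity $2k+1$ (versus $2k$) and the strict toughness are used precisely to absorb the loss of one successor at the endpoint $y$.
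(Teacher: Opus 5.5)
\textbf{There is a genuine gap.} Your Step 1 and the first half of Step 2 are correct and match the paper's Claims 1 and 3: $S^+$ is independent, misses $H$, and each attachment vertex $u_i\neq y$ is adjacent to all but at most $k-1$ vertices of $S^+$. One side remark: $u_i^+u_j^-\notin E(G)$ is not an exchange fact, since an edge joining the two ends of a skipped segment does not let you reinsert that segment.

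After that, the proposal contains no argument.

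\begin{itemize}
\item The exchange contradiction you aim for in Step 2 cannot exist. The edges $u_iu_j^+$ are \emph{forced} by freeness. Via exchange, an edge $wu_a^+$ only constrains the neighbours of $w^+$ in $S^+$, and for $w=u_i$ that vertex is $u_i^+$, which has no neighbours in $S^+$ anyway. So the contradiction must come from toughness.
\item In Step 3 the cut $T=N_P(H)$ is the wrong set. The separation you flag as the main obstacle is not merely unproven; for this $T$ it fails in general. The exchange facts do not rule out a non-attachment vertex $w$ of $P$ being adjacent to two successors $u_a^+,u_b^+$. Then $u_a^+wu_b^+$ is an induced $P_3$, so $w$ sees at least $|S^+|-k+1$ successors and merges all of them into one component of $G-T$.
\item Likewise, other components $H'$ of $G-V(P)$ survive in $G-T$. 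They cannot touch two successors, but they can join non-successor vertices of different segments.
\end{itemize}
No density argument will make $N_P(H)$ separate the successors.

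\textbf{The missing idea (the paper's route).} Cut at the set $V_{\mathrm{high}}$ of \emph{all} vertices of $P$ having at least $|S^+|-k+1$ neighbours in $S^+$, and pay for each extra cut vertex with an extra component. You need four facts.
\begin{enumerate}
\item[(a)] Every vertex of $P$ has at most one, or at least $|S^+|-k+1$, neighbours in $S^+$. This is freeness applied to $u_a^+wu_b^+$.
\item[(b)] No two high vertices are consecutive on $P$. If $w,w^+$ were both high, they would share at least $|S^+|-2k+2\ge 2$ successors; this is where $m\ge 2k+1$ is used when $y\in N_P(H)$. Then a rerouting such as $x\overrightarrow{P}u_a\,H\,u_b\overleftarrow{P}u_a^+\,w\overleftarrow{P}u_b^+\,w^+\overrightarrow{P}y$, or a variant depending on positions, is longer.
\item[(c)] Each maximal run of low vertices induces a clique and has at most one neighbour in $S^+$ outside itself. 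This is freeness again, using $|S^+|-2\ge k$.
\item[(d)] Two distinct maximal runs, other than the pair containing $x$ and $y$, are not joined by an edge. Their union would be a clique, and two common successor-neighbours of the high vertices bounding them give a longer rerouting. Nor are they joined through another component $H'$ of $G-V(P)$. The first two inner vertices of a shortest connecting path must together see two successors $u_a^+,u_b^+$, and detouring through both $H$ and $H'$ lengthens $P$.
\end{enumerate}

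High vertices and maximal runs alternate along $P$, and $H$ is cut off as well. A short case count on whether $x$ and $y$ are high then gives $c(G-V_{\mathrm{high}})>|V_{\mathrm{high}}|$, except in the worst case where both $x$ and $y$ are high. There only $c(G-V_{\mathrm{high}})\ge |V_{\mathrm{high}}|$ holds, which is exactly why $\tau(G)>1$ rather than $\tau(G)\ge 1$ is required.
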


	In section 2, we present notations, terminology and a lemma about the properties of $(P_{3}\cup kP_{1})$-free graphs. In section 3, we provide the proofs of main results.
	
	\section{Preliminaries}
	The graphs considered in this paper are simple, finite and undirected. Let $G$ be a graph with vertex set $V(G)$ and edge set $E(G)$. For a vertex $v\in V(G)$, let $N_G(v)$ be its neighborhood, and for a set $S\subseteq V(G)$, let $N_G(S)=\bigcup_{v\in S}N_G(v)\setminus S$. Throughout this paper, we write $N(v)$ for $N_G(v)$ when no confusion arise. For a subgraph $H$ of $G$, set $N_H(v)=N_G(v)\cap V(H)$ and $N_H(S)=\bigcup_{v\in S}N_H(v)\setminus S$. Denote by $G[S]$ the subgraph induced by $S$, and let $G-S=G[V(G)\setminus S]$. The number of components of $G$ is denoted by $c(G)$. For graph-theoretical terminologies and notation not defined here, we refer to \cite{Bondy}.
	
	For a given graph $H$, a graph $G$ is called $H$-free if $G$ does not contain $H$ as an induced subgraph. For two vertex disjoint graphs $F_{1}$ and $F_{2}$, the disjoint union of graphs $F_{1}$ and $F_{2}$ is denoted by $F_{1}\cup F_{2}$. As usual, $P_{n}$ denotes the path on $n$ vertices. Also, we refer to $P_{n}$ as $n$-path. For positive integers $k$ and $l$, $kP_{l}$ denotes $k$ disjoint copies of the path $P_{l}$. For two distinct vertices $u$ and $v$ in $G$, a $(u,v)$-path $P$ is a path that has $u$ and $v$ as its endpoints. We can denote $P$ as $uPv$. In a graph $G$, a Hamiltonian path is a path that traverses every vertex exactly once, and a graph is called Hamiltonian-connected if it contains a Hamiltonian path between any two distinct vertices. A Hamiltonian cycle is a cycle that traverses every vertex exactly once, and a graph possessing such a cycle is a Hamiltonian graph. 
	
	Let $C$ be a cycle. We give $C$ a fixed orientation, for $u\in V(C)$ and a positive integer $j$, $u^{+j}$ and $u^{-j}$ denote the $j$-th successor and the $j$-th predecessor, respectively. Conveniently, we write $u^{+}$ and $u^{-}$ for $u^{+1}$ and $u^{-1}$, respectively. For $v,w\in V(C)$, $v\overrightarrow{C}w$ is a subpath of $C$ that denotes the $(v,w)$-path from $v$ to $w$ along the orientation of $C$. And $v\overleftarrow{C}w$ denotes $w\overrightarrow{C}v$. Let $V(v\overrightarrow{C}w)$ denote the set of all vertices of $v\overrightarrow{C}w$. The same notation conventions apply to a path $P$ that has been given  a fixed orientation.
	
	\begin{lemma}
		For an integer $k\ge 2$, let $G$ be a $(P_{3}\cup kP_{1})$-free graph. If $c(G-S)\ge k+1$ for some $S\subseteq V(G)$, then each component of $G-S$ is complete.
	\end{lemma}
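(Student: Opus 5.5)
We argue by contradiction. Suppose some component $D$ of $G-S$ is not complete, while $c(G-S)\ge k+1$. Our aim is to find an induced copy of $P_3\cup kP_1$ in $G$. The $P_3$ will come from $D$, and the $k$ isolated vertices will come from $k$ of the other components of $G-S$.

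First we find an induced $P_3$ inside $D$. Since $D$ is connected and not complete, it contains two nonadjacent vertices. Choose such a pair $x,z$ at minimum distance in $D$. That distance is at least $2$. If it were larger than $2$, the vertex adjacent to $x$ on a shortest $(x,z)$-path would give a closer nonadjacent pair together with $x$ or $z$; more precisely, the third vertex on that path is nonadjacent to $x$ and closer. So $d_D(x,z)=2$, and there is a common neighbour $y\in V(D)$ of $x$ and $z$. Since $xz\notin E(G)$, the set $\{x,y,z\}$ induces a $P_3$ in $G$. This is the standard fact that a connected non-complete graph contains an induced $P_3$.

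Next we pick the isolated vertices. Because $c(G-S)\ge k+1$, there are at least $k$ components $D_1,\dots,D_k$ of $G-S$ other than $D$. Choose one vertex $w_i\in V(D_i)$ for each $i$. Distinct components of $G-S$ have no edges between them, so the following two facts hold:
\begin{itemize}
\item the $w_i$ are pairwise nonadjacent;
\item no $w_i$ is adjacent to $x$, $y$ or $z$.
\end{itemize}
Hence $G[\{x,y,z,w_1,\dots,w_k\}]$ is an induced $P_3\cup kP_1$. This contradicts the assumption that $G$ is $(P_3\cup kP_1)$-free, so every component of $G-S$ is complete.

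There is no real obstacle here. The only points needing care are that the $P_3$ is induced, which relies on $xz\notin E(G)$, and that the chosen vertices all lie in $G-S$. The second point ensures that the absence of edges between distinct components of $G-S$ is really the absence of edges in $G$ among these vertices. The hypothesis $k\ge 2$ is not used beyond making the statement meaningful.
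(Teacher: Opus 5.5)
Your proof is correct and follows the same route as the paper: take an induced $P_3$ in a non-complete component and one vertex from each of $k$ other components to obtain an induced $P_3\cup kP_1$. The one difference is that you justify the existence of the induced $P_3$ with the minimum-distance argument, which the paper simply asserts.
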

	\noindent\textbf{Proof.} Suppose to the contrary that there exists a non complete component $H_{1}$ in $G-S$. Then $H_1$ contains a $3$-path $uvw$ with $uw\notin E(G)$. Since $c(G-S)\ge k+1$, we let $H_{2},H_{3},..,H_{k+1}$ be $k$ components of $G-S$ different from $H_{1}$. For any $i\in [2,k+1]$, let $z_{i}\in H_{i}$. Hence $\{u,v,w\}\cup \{z_{2},z_{3},..,z_{k+1}\}$ induces a $P_{3}\cup kP_{1}$, a contradiction. $\hfill\Box$

	\section{Main Results}
	
	\thmOne* 
	\noindent\textbf{Proof.} Suppose to the contrary that $G$ is not Hamiltonian. Let $C$ be the longest cycle of $G$ and $H$ be a component of $G-C$. Clearly, $|V(H)|\ge 1$. We give $C$ a fixed orientation. Let $H_{C}=N_{C}(H)=\{x_{1},x_{2},...,x_{h}\}$ and $H_{C}^{+}=\{x_{1}^{+},x_{2}^{+},...,x_{h}^{+}\}$. Without loss of generality, assume that $x_{1},x_{2},...,x_{h}$ appear in this order along the orientation of $C$. In the following proof, we denote the convention that indices are taken modulo $h$.

	\noindent{\bf Claim 1.}	For any $i\in [1,h]$, $x_{i}x_{i+1}\notin E(C)$ and $H_{C}^{+}$ is an independent set.
	\begin{proof}
		Suppose that $x_{i}x_{i+1}\in E(C)$ for some $i\in [1,h]$. Let $y_{i},y_{i+1}\in V(H)$ such that $x_{i}y_{i},x_{i+1}y_{i+1}\in E(G)$. Since $H$ is a component of $G-C$, there exists a $(y_{i},y_{i+1})$-path $P_{H}$ in $H$. Note that $y_{i}$ may be the same as $y_{i+1}$. Hence $x_{i}y_{i}P_{H}y_{i+1}x_{i+1}\overrightarrow {C} x_{i}$ is a longer cycle than $C$, a contradiction.
		
		Suppose that $x_{i}^{+}x_{j}^{+}\in E(G)$ for some distinct $i,j\in [1,h]$. Let $y_{i},y_{j}\in V(H)$ such that $x_{i}y_{i},x_{j}y_{j}\in E(G)$. Let $P_{H}'$ be a $(y_{i},y_{j})$-path in $H$. Without loss of generality, assume that $i<j$. Then $x_{i}y_{i}P_{H}'y_{j}x_{j}\overleftarrow {C} x_{i}^{+} x_{j}^{+} \overrightarrow {C} x_{i}$ is a longer cycle than $C$, a contradiction.
	\end{proof}
	
By Claim 1, we obtain that $V(C)\setminus H_{C}\ne \emptyset$. Thus, $c(G-H_{C})\ge 2$, i.e., $H_{C}$ is a cutset of $G$. Since $G$ is $2k$-connected, we have $|H_{C}|=h\ge \kappa(G) \ge 2k $.  
	
	\noindent{\bf Claim 2.}	Let $uv\in E(C)$, where $v=u^{+}$. If $ux_{i}^{+}\in E(G)$ for some $i\in [1,h]$, then $N(v)\cap (H_{C}^{+}\cap (V(x_{i}^{+}\overrightarrow {C}u)\setminus\{x_{i}^{+}\}))=\emptyset$.
	
	\begin{proof}
		Suppose to the contrary that $x_{j}^{+}\in N(v)\cap (H_{C}^{+}\cap (V(x_{i}^{+}\overrightarrow {C}u)\setminus\{x_{i}^{+}\}))$. Let $y_{i},y_{j}\in V(H)$ such that $x_{i}y_{i},x_{j}y_{j}\in E(G)$, and let $P_{H}$ be a $(y_{i},y_{j})$-path in $H$. Then $x_{i}y_{i}P_{H}y_{j}x_{j}\overleftarrow{C}x_{i}^{+}u\overleftarrow{C}x_{j}^{+}v\overrightarrow{C}x_{i}$ is a longer cycle than $C$, a contradiction.
	\end{proof}

	\noindent{\bf Claim 3.} $|N(x_{i})\cap H_{C}^{+}|\ge h-k+1$ for any $i\in [1,h]$.
	\begin{proof}
		Suppose to the contrary that $|N(x_{i})\cap H_{C}^{+}|\le h-k$ for some $i\in [1,h]$. Then $|H_{C}^{+}\setminus (N(x_{i})\cap H_{C}^{+})|\ge k$. We may let $x_{i_{1}}^{+},x_{i_{2}}^{+},...,x_{i_{k}}^{+}\in H_{C}^{+}\setminus (N(x_{i})\cap H_{C}^{+}) $ be $k$ distinct vertices. By Claim 1, we have $H_{C}\cap H_{C}^{+}=\emptyset$ and $H_{C}^{+}$ is an independent set. Let $y\in V(H)$ such that $x_{i}y\in E(G)$. Then $yx_{i}x_{i}^{+}$ is an induced $P_{3}$. Thus, $\{y,x_{i},x_{i}^{+}\}\cup \{x_{i_{1}}^{+},x_{i_{2}}^{+},...,x_{i_{k}}^{+}\}$ induces a $P_{3}\cup kP_{1}$, a contradiction.
	\end{proof}
	
	\noindent{\bf Claim 4.} For any $u\in V(C)$, $|N(u)\cap H_{C}^{+}|\le 1$ or $|N(u)\cap H_{C}^{+}|\ge h-k+1$.
	\begin{proof}
		Suppose to the contrary that $2\le |N(u)\cap H_{C}^{+}|\le h-k$ for some $u\in V(C)$. Then $|H_{C}^{+}\setminus (N(u)\cap H_{C}^{+})|\ge k$. Let  $x_{a}^{+},x_{b}^{+}\in N(u)\cap H_{C}^{+} $, and let $x_{i_{1}}^{+},x_{i_{2}}^{+},...,x_{i_{k}}^{+}\in H_{C}^{+}\setminus (N(u)\cap H_{C}^{+})$ be $k$ distinct vertices. Therefore, $\{x_{a}^{+},u,x_{b}^{+}\}\cup \{x_{i_{1}}^{+},x_{i_{2}}^{+},...,x_{i_{k}}^{+}\}$ induces a $P_{3}\cup kP_{1}$, a contradiction.
	\end{proof}
	
	For $u,v\in V(C)$, we know $u\overrightarrow{C}v$ is a subpath of $C$. If $G[V(u\overrightarrow{C}v)]$ is a clique and $|N(V(u\overrightarrow{C}v))\cap H_{C}^{+}|\le 1$, then we refer to such a subpath as a $1$-clique. For any $w\in V(C)$, if $|N(w)\cap H_{C}^{+}|\ge h-k+1$, then $w$ is denoted as $(h-k+1)$-vertex. By Claim 3, each vertex in $H_{C}$ is an $(h-k+1)$-vertex.

	\noindent{\bf Claim 5.} Let $u$ be an arbitrary vertex in $C$. Then the following statements hold.
	
	\noindent{\bf (\romannumeral1 )} If $|N(u)\cap H_{C}^{+}|\ge h-k+1$, then $|N(u^{-})\cap H_{C}^{+}|\le 1$ and $|N(u^{+})\cap H_{C}^{+}|\le 1$.
	
	\noindent{\bf (\romannumeral2)} If $|N(u)\cap H_{C}^{+}|\le 1$, then $G[V(u^{-j_{1}}\overrightarrow{C}u^{+j_{2}})\setminus \{u^{-j_{1}},u^{+j_{2}}\}]$ is a $1$-clique, where $j_{1}$ and $j_{2}$ are the smallest positive integers such that $|N(u^{-j_{1}})\cap H_{C}^{+}|\ge h-k+1$ and $|N(u^{+j_{2}})\cap H_{C}^{+}|\ge h-k+1$, respectively.
	
	\begin{proof}
		(\romannumeral1 ) Suppose to the contrary that $|N(u^{+})\cap H_{C}^{+}|\ge 2$ or $|N(u^{-})\cap H_{C}^{+}|\ge 2$. If $|N(u^{+})\cap H_{C}^{+}|\ge 2$, by Claim 4, we have $|N(u^{+})\cap H_{C}^{+}|\ge h-k+1$. Since $|N(u)\cap H_{C}^{+}|\ge h-k+1$, we have $|(N(u)\cap N(u^{+}))\cap H_{C}^{+}|\ge |N(u)\cap H_{C}^{+}|+|N(u^{+})\cap H_{C}^{+}|-|H_{C}^{+}|\ge h-2k+2\ge 2$. Without loss of generality, assume that $x_{a}^{+},x_{b}^{+}\in (N(u)\cap N(u^{+}))\cap H_{C}^{+} $ and $a>b$. Then $ux_{b}^{+}\in E(G)$ and $x_{a}^{+}\in N(u^{+})\cap (H_{C}^{+}\cap (V(x_{b}^{+}\overrightarrow {C}u)\setminus\{x_{b}^{+},u\}))$, which is a contradiction to Claim 2. Thus $|N(u^{+})\cap H_{C}^{+}|\le 1$. Similarly, we can prove that $|N(u^{-})\cap H_{C}^{+}|\le 1$.

		(\romannumeral2 ) If $|V(u^{-j_{1}}\overrightarrow{C}u^{+j_{2}})\setminus \{u^{-j_{1}},u^{+j_{2}}\}|=1$, then the result clearly holds. If $|V(u^{-j_{1}}\overrightarrow{C}u^{+j_{2}})\setminus \{u^{-j_{1}},u^{+j_{2}}\}|\ge 2$, then we suppose to the contrary that $G[V(u^{-j_{1}}\overrightarrow{C}u^{+j_{2}})\setminus \{u^{-j_{1}},u^{+j_{2}}\}]$ is not a $1$-clique. By the definition of $1$-clique, we obtain that $|N(V(u^{-j_{1}}\overrightarrow{C}u^{+j_{2}})\setminus \{u^{-j_{1}},u^{+j_{2}}\})\cap H_{C}^{+}|\ge 2$ or $G[V(u^{-j_{1}}\overrightarrow{C}u^{+j_{2}})\setminus \{u^{-j_{1}},u^{+j_{2}}\}]$ is not a clique. By the definition of $j_{1}$ and $j_{2}$, for any vertex $v\in V(u^{-j_{1}}\overrightarrow{C}u^{+j_{2}})\setminus \{u^{-j_{1}},u^{+j_{2}}\}$, we have $|N(v)\cap H_{C}^{+}|\le 1$.
		
		If $|N(V(u^{-j_{1}}\overrightarrow{C}u^{+j_{2}})\setminus \{u^{-j_{1}},u^{+j_{2}}\})\cap H_{C}^{+}|\ge 2$, let $x_{a}^{+}\in N(V(u^{-j_{1}}\overrightarrow{C}u^{+j_{2}})\setminus \{u^{-j_{1}},u^{+j_{2}}\})\cap H_{C}^{+} $. Since $|N(v)\cap H_{C}^{+}|\le 1$ for any vertex $v\in V(u^{-j_{1}}\overrightarrow{C}u^{+j_{2}})\setminus \{u^{-j_{1}},u^{+j_{2}}\}$, there exists an edge $wz\in E(G)$ in $G[V(u^{-j_{1}}\overrightarrow{C}u^{+j_{2}})\setminus \{u^{-j_{1}},u^{+j_{2}}\}]$ such that $x_{a}^{+}\in N(w)\cup N(z)$ and $x_{a}^{+}\notin N(w)\cap N(z)$. Without loss of generality, assume that $x_{a}^{+}\in N(w)$. Then $x_{a}^{+}wz$ is an induced $P_{3}$. Hence $|H_{C}^{+}\setminus ((N(w)\cap H_{C}^{+})\cup (N(z)\cap H_{C}^{+})) |\ge h-2\ge 2k-2\ge k$. Let $x_{i_{1}}^{+},x_{i_{2}}^{+},...,x_{i_{k}}^{+}\in H_{C}^{+}\setminus ((N(w)\cap H_{C}^{+})\cup (N(z)\cap H_{C}^{+}))$ be $k$ distinct vertices. Therefore $\{x_{a}^{+}, w ,z\} \cup \{x_{i_{1}}^{+},x_{i_{2}}^{+},...,x_{i_{k}}^{+}\}$ induces a $P_{3}\cup kP_{1}$, a contradiction.
		
		Thus, we assume that $|N(V(u^{-j_{1}}\overrightarrow{C}u^{+j_{2}})\setminus \{u^{-j_{1}},u^{+j_{2}}\})\cap H_{C}^{+}|\le 1$ and $G[V(u^{-j_{1}}\overrightarrow{C}u^{+j_{2}})\setminus \{u^{-j_{1}},u^{+j_{2}}\}]$ is not a clique, then it contains a $3$-path $P_{3}=v_{1}v_{2}v_{3}$ with $v_{1}v_{3}\notin E(G)$. Hence $|N(P_{3})\cap H_{C}^{+}|\le 1$. Let $x_{i_{1}}^{+},x_{i_{2}}^{+},...,x_{i_{k}}^{+}\in H_{C}^{+}\setminus (N(P_{3})\cap H_{C}^{+})$ be $k$ distinct vertices. Therefore, $\{v_{1},v_{2},v_{3}\}\cup \{x_{i_{1}}^{+},x_{i_{2}}^{+},...,x_{i_{k}}^{+}\}$ induces a $P_{3}\cup kP_{1}$, a contradiction.
	\end{proof}	
	Let $\mathcal{H}$ be the set of all maximal  $1$-cliques and $V_{h-k+1}$ be the set of all $(h-k+1)$-vertices on $C$. 
	By the proof of Claim 5, we can conclude that maximal $1$-clique and $(h-k+1)$-vertex are alternate on $C$. By this alternating property, we obtain that $|\mathcal{H}|=|V_{h-k+1}|$.
		
	 \noindent{\bf Claim 6.}  There is no edge between any two maximal $1$-cliques in $\mathcal{H}$.
	\begin{proof}
		Suppose to the contrary that there exists an edge $e=uv$ between some two maximal $1$-cliques in $\mathcal{H}$. Without loss of generality, assume that $uv$ is an edge between $H_{i_1}^{j_1}$ and $H_{i_2}^{j_2}$, $u\in V(H_{i_{1}}^{j_{1}})$, $v\in V(H_{i_{2}}^{j_{2}}) $ and $H_{i_{1}}^{j_{1}}$ precedes $H_{i_{2}}^{j_{2}}$ along the orientation of $C$. By the definition of $1$-clique, we have $|N(H_{i_{1}}^{j_{1}})\cap H_{C}^{+}|\le 1$ and $|N(H_{i_{2}}^{j_{2}})\cap H_{C}^{+}|\le 1$. 
		
		Then we can conclude that $G[V(H_{i_{1}}^{j_{1}}\cup H_{i_{2}}^{j_{2}})]$ is a clique. Otherwise, there exists a $3$-path $w_{1}w_{2}w_{3}$ in $G[V(H_{i_{1}}^{j_{1}}\cup H_{i_{2}}^{j_{2}})]$ with $w_{1}w_{3}\notin E(G)$. Since $|N(H_{i_{1}}^{j_{1}})\cap H_{C}^{+}|\le 1$ and $|N(H_{i_{2}}^{j_{2}})\cap H_{C}^{+}|\le 1$, we have $|N(\{w_{1},w_{2},w_{3}\})\cap H_{C}^{+}|\le 2$. Since $h\ge 2k$ and $k\ge 2$, we obtain $|H_{C}^{+}\setminus (N(\{w_{1},w_{2},w_{3}\})\cap H_{C}^{+})|\ge h-2\ge 2k-2\ge k$. Thus we may let $x_{i_{1}}^{+},x_{i_{2}}^{+},...,x_{i_{k}}^{+}\in H_{C}^{+}\setminus (N(\{w_{1},w_{2},w_{3}\})\cap H_{C}^{+})$ be $k$ distinct vertices. Then $\{w_{1},w_{2},w_{3}\}\cup \{x_{i_{1}}^{+},x_{i_{2}}^{+},...,x_{i_{k}}^{+}\}$ induces a $P_{3}\cup kP_{1}$, a contradiction. 
		
		Let $u_{1},u_{2}$ be two vertices of $H_{i_{1}}^{j_{1}}$ such that $|N(u_{1}^{-})\cap H_{C}^{+}|\ge h-k+1$ and $|N(u_{2}^{+})\cap H_{C}^{+}|\ge h-k+1$. Note that $u_{1}$ may be the same as $u_{2}$. Also, let $v_{1},v_{2}$ be two vertices of $H_{i_{2}}^{j_{2}}$ such that $|N(v_{1}^{-})\cap H_{C}^{+}|\ge h-k+1$ and $|N(v_{2}^{+})\cap H_{C}^{+}|\ge h-k+1$, where $v_{1}^{-}$ and $u_{2}^{+}$ may be the same. Therefore,  $|(N(u_{2}^{+})\cap N(v_{2}^{+}))\cap H_{C}^{+}|\ge |N(u_{2}^{+})\cap H_{C}^{+}|+|N(v_{2}^{+})\cap H_{C}^{+}|-|H_{C}^{+}|\ge h-2k+2\ge 2 $. Then let $x_{a}^{+},x_{b}^{+}$ be two distinct vertices in $(N(u_{2}^{+})\cap N(v_{2}^{+}))\cap H_{C}^{+}$. Without loss of generality, assume $a>b$. Let $y_{a},y_{b}\in V(H)$ with $x_{a}y_{a},x_{b}y_{b}\in E(G)$ and let $P_{H}$ be a $(y_{a},y_{b})$-path in $H$. Since $G[V(H_{i_{1}}^{j_{1}}\cup H_{i_{2}}^{j_{2}})]$ is a clique, there is a Hamiltonian path $P$ of $G[V(H_{i_{1}}^{j_{1}}\cup H_{i_{2}}^{j_{2}})]$ from $u_{1}$ to $v_{1}$.
		
		If $x_{a}^{+},x_{b}^{+}\in V(v_{2}^{+}\overrightarrow{C}u_{2})$, then $x_{a}y_{a}P_{H}y_{b}x_{b}\overleftarrow{C}v_{2}^{+}x_{a}^{+}\overrightarrow{C}u_{1}Pv_{1}\overleftarrow{C}u_{2}^{+}x_{b}^{+}\overrightarrow{C}x_{a} $ is a longer cycle than $C$, a contradiction.
		
		If $x_{a}^{+},x_{b}^{+}\in V(u_{2}^{+}\overrightarrow{C}v_{2})$, then $x_{a}y_{a}P_{H}y_{b}x_{b}\overleftarrow{C}u_{2}^{+}x_{a}^{+}\overrightarrow{C}v_{1}Pu_{1}\overleftarrow{C}v_{2}^{+}x_{b}^{+}\overrightarrow{C}x_{a} $ is a longer cycle than $C$, a contradiction.
		
		If $x_{a}^{+}\in V(v_{2}^{+}\overrightarrow{C}u_{2})$ and $x_{b}^{+}\in V(u_{2}^{+}\overrightarrow{C}v_{2})$, then $x_{a}y_{a}P_{H}y_{b}x_{b}\overleftarrow{C}u_{2}^{+}x_{b}^{+}\overrightarrow{C}v_{1}Pu_{1}\overleftarrow{C}x_{a}^{+}v_{2}^{+}\overrightarrow{C}x_{a}$ is a longer cycle than $C$, a contradiction.
	\end{proof}

	\noindent{\bf Claim 7.} Any two maximal $1$-cliques in $\mathcal{H}$ are contained in distinct components of $G-V_{h-k+1}$.
	\begin{proof}
		Suppose to the contrary that there exist two maximal $1$-cliques $H_{i_{1}}^{j_{1}}$ and $H_{i_{2}}^{j_{2}}$ that are contained in the same component of $G-V_{h-k+1}$. Without loss of generality, assume that $H_{i_{1}}^{j_{1}}$ precedes $H_{i_{2}}^{j_{2}}$ along the orientation of $C$. By Claim 6, we obtain that there exists a component $H'$ of $G-C$ such that $H'$ connects $H_{i_{1}}^{j_{1}}$ and $H_{i_{2}}^{j_{2}}$. Therefore, there exists at least one edge between $H'$ and $H_{i_{1}}^{j_{1}}$, and at least one edge between $H'$ and $H_{i_{2}}^{j_{2}}$. Let $u\in V(H_{i_{1}}^{j_{1}})$ and $v\in V(H_{i_{2}}^{j_{2}})$ such that $N(u)\cap V(H')\ne \emptyset $ and $N(v)\cap V(H')\ne \emptyset $. And let $P_{uv}$ be a shortest $(u,v)$-path whose all internal vertices belong to $H'$. Without loss of generality, assume that $P_{uv}=uz_{1}z_{2}... z_{s}v$. By the shortest path property of $P_{uv}$, we obtain that $uz_{1}z_{2}$ is an induced $3$-path. By the definition of $1$-clique, we have $|N(u)\cap H_{C}^{+}|\le 1$ and $|N(v)\cap H_{C}^{+}|\le 1$. And we can conclude that $|(N(z_{1})\cup N(z_{2}))\cap H_{C}^{+}|\ge h-k\ge k\ge 2$. Otherwise, $|(N(z_{1})\cup N(z_{2}))\cap H_{C}^{+}|\le h-k-1$. Then $|H_{C}^{+}\setminus (((N(z_{1})\cup N(z_{2}))\cap H_{C}^{+})\cup (N(u)\cap H_{C}^{+}))|\ge h-(h-k-1)-1=k $. Let $x_{i_{1}}^{+},x_{i_{2}}^{+},...,x_{i_{k}}^{+}\in H_{C}^{+}\setminus (((N(z_{1})\cup N(z_{2}))\cap H_{C}^{+})\cup (N(u)\cap H_{C}^{+}))$ be $k$ distinct vertices. Therefore, $\{u,z_{1},z_{2}\}\cup \{x_{i_{1}}^{+},x_{i_{2}}^{+},...,x_{i_{k}}^{+}\}$ induces a $P_{3}\cup kP_{1}$, a contradiction. Let $x_{a}^{+},x_{b}^{+}\in (N(z_{1})\cup N(z_{2}))\cap H_{C}^{+}$ be two distinct vertices. Without loss of generality, assume $a<b$. Let $y_{a},y_{b}\in V(H)$ with $x_{a}y_{a},x_{b}y_{b}\in E(G)$, and let $P_{ab}$ be a $(y_{a},y_{b})$-path in $H$. Since $|N(u)\cap H_{C}^{+}|\le 1 $, $|N(v)\cap H_{C}^{+}|\le 1$, $\{u,v\}\subseteq N_{C}(H')$ and $H_{C}\subseteq V_{h-k+1}$, we have $H\ne H'$. Hence, there exists an $(x_{a}^{+},x_{b}^{+})$-path $P_{H'}$ whose all internal vertex belong to $H'$ and are disjoint from $V(P_{ab})$. Therefore, $x_{a}y_{a}P_{ab}y_{b}x_{b}\overleftarrow{C}x_{a}^{+}P_{H'}x_{b}^{+}\overrightarrow{C}x_{a}$ is a longer cycle than $C$, a contradiction.
		\end{proof}
	 
	 Since $H_{C}^{+}$ is an independent set, no two vertices of $H_{C}^{+}$ belong to the same $1$-clique. By Claim 7, we obtain that $c(G-V_{h-k+1})\ge |\mathcal{H}|+1\ge |H_{C}^{+}|+1= h+1\ge 2k+1\ge 5$. Since $1$-clique and $(h-k+1)$-vertex are alternate on $C$, we have $|\mathcal{H}|=|V_{h-k+1}|$. Therefore $\tau (G)\le \frac{|V_{h-k+1}|}{c(G-V_{h-k+1})}\le \frac{|\mathcal{H}|}{|\mathcal{H}|+1}<1$, a contradiction.$\hfill\Box$

    \thmTwo*
	\noindent\textbf{Proof.} 
On the contrary, assume that there are two distinct vertices $u,v\in V(G)$ such that the longest $(u,v)$-path $P$ in $G$ is not a Hamiltonian path. Then $V(G)\setminus V(P)\ne \emptyset$. Let $H$ be a component of $G-P$.  Clearly, $|V(H)|\ge 1$. We give $P$ an orientation, which is starting from $u$ and ending at $v$. Let $H_{P}=N_{P}(H)=\{x_{1},x_{2},...,x_{h}\}$. Without loss of generality, assume that $x_{1},x_{2},...,x_{h}$ appear in this order along the orientation of $P$. Let $H_{P}^{+}=\{x_{1}^{+},x_{2}^{+},...,x_{h-1+\ell}^{+}\}$, where $\ell =1$ if $x_{h}\ne v$ and $\ell =0$ otherwise. 
	
	\noindent{\bf Claim 1.}	For any $i\in [1,h-1]$, $x_{i}x_{i+1}\notin E(P)$ and $H_{P}^{+}$ is an independent set.
	\begin{proof}
		Suppose that $x_{i}x_{i+1}\in E(P)$ for some $i\in [1,h-1]$. Let $y_{i},y_{i+1}\in V(H)$ such that $x_{i}y_{i},x_{i+1}y_{i+1}\in E(G)$. Since $H$ is a component of $G-P$, there exists a $(y_{i},y_{i+1})$-path $P_{H}$ in $H$. Note that $y_{i}$ may be the same as $y_{i+1}$. Hence $u\overrightarrow {P}x_{i}y_{i}P_{H}y_{i+1}x_{i+1}\overrightarrow {P}v$ is a longer $(u,v)$-path than $P$, a contradiction.
		
		Suppose that $x_{i}^{+}x_{j}^{+}\in E(G)$ for some distinct $i,j\in [1,h-1+\ell]$. Let $y_{i},y_{j}\in V(H)$ such that $x_{i}y_{i},x_{j}y_{j}\in E(G)$, and let $P_{H}'$ be a $(y_{i},y_{j})$-path in $H$. Without loss of generality, assume that $i<j$. Then $u\overrightarrow{P}x_{i}y_{i}P_{H}'y_{j}x_{j}\overleftarrow {P} x_{i}^{+} x_{j}^{+} \overrightarrow {P} v$ is a longer $(u,v)$-path than $P$, a contradiction.
	\end{proof}
By Claim 1, we obtain that $V(P)\setminus H_{P}\ne \emptyset$. Thus, $c(G-H_{P})\ge 2$, i.e., $H_{P}$ is a cutset of $G$. Since $\kappa(G)\ge 2k+1$, we have $|H_{P}|=h\ge \kappa(G) \ge 2k+1$. 
	
	\noindent{\bf Claim 2.}	Let $wz\in E(P)$, where $z=w^{+}$. If there exists a vertex $x_{a}^{+}\in H_{P}^{+}$ such that $wx_{a}^{+}\in E(G)$, then the following statements hold.
	
	\noindent{\bf (\romannumeral1)} If $x_{a}^{+}\in V(u\overrightarrow{P}w)$, then $N(z)\cap (H_P^+\cap (V(x_a^+\overrightarrow{P}w)\setminus \{x_a^+\}))=\emptyset$.

	\noindent{\bf (\romannumeral2)} If $x_{a}^{+}\in V(w\overrightarrow{P}v)$, then $N(z)\cap (H_{P}^{+}\cap (V(u\overrightarrow{P}w)\cup (V(x_{a}^{+}\overrightarrow{P}v)\setminus \{x_{a}^{+}\})))=\emptyset $.

	\begin{proof}
		\noindent{ (\romannumeral1)} Suppose to the contrary that there exists a vertex $x_{b}^{+}\in N(z)\cap (H_{P}^{+}\cap (V(x_{a}^{+}\overrightarrow{P}w)\setminus \{x_{a}^{+}\}))$. Let $y_{a},y_{b}\in V(H)$ such that $x_{a}y_{a},x_{b}y_{b}\in E(G)$, and let $P_{H}$ be a $(y_{a},y_{b})$-path in $H$. Then $u\overrightarrow{P}x_{a}y_{a}P_{H}y_{b}x_{b}\overleftarrow{P}x_{a}^{+}w\overleftarrow{P}x_{b}^{+}z\overrightarrow{P}v$ is a longer $(u,v)$-path than $P$, a contradiction.
		
		\noindent{ (\romannumeral2)} Suppose to the contrary that there exists a vertex $x_{b}^{+}\in  N(z)\cap (H_{P}^{+}\cap (V(u\overrightarrow{P}w)\cup (V(x_{a}^{+}\overrightarrow{P}v)\setminus \{x_{a}^{+}\}))) $. Let $y_{a},y_{b}\in V(H)$ such that $x_{a}y_{a},x_{b}y_{b}\in E(G)$, and let $P_{H}$ be a $(y_{a},y_{b})$-path in $H$. If $x_{b}^{+}\in V(u\overrightarrow{P}w)$, then $u\overrightarrow{P}x_{b}y_{b}P_{H}y_{a}x_{a}\overleftarrow{P}zx_{b}^{+}\overrightarrow{P}wx_{a}^{+}\overrightarrow{P}v$ is a longer $(u,v)$-path than $P$, a contradiction. If $x_{b}^{+}\in V(x_{a}^{+}\overrightarrow{P}v)\setminus \{x_{a}^{+}\}$, then $u\overrightarrow{P}wx_{a}^{+}\overrightarrow{P}x_{b}y_{b}P_{H}y_{a}x_{a}\overleftarrow{P}zx_{b}^{+}\overrightarrow{P}v$ is a longer $(u,v)$-path than $P$, a contradiction.
	\end{proof}
	
	\noindent{\bf Claim 3.} For any integer $i\in [1,h-1+\ell]$, $|N(x_{i})\cap H_{P}^{+}|\ge h-k+\ell$.
	\begin{proof}
		Suppose to the contrary that $|N(x_{i})\cap H_{P}^{+}|\le h-k+\ell-1$ for some $i\in [1,h-k+\ell]$. Since $|H_{P}^{+}\setminus (N(x_{i})\cap H_{P}^{+})|\ge h-1+\ell -(h-k+\ell-1)=k$, there exist $k$ distinct vertices $x_{i_{1}}^{+},x_{i_{2}}^{+},...,x_{i_{k}}^{+}$ in $H_{P}^{+}\setminus (N(x_{i})\cap H_{P}^{+}) $. By Claim 1, $H_{P}\cap H_{P}^{+}=\emptyset$ and $H_{P}^{+}$ is an independent set. Let $y\in V(H)$ such that $x_{i}y\in E(G)$. Then $yx_{i}x_{i}^{+}$ is an induced $P_{3}$. Thus, $\{y,x_{i},x_{i}^{+}\}\cup \{x_{i_{1}}^{+},x_{i_{2}}^{+},...,x_{i_{k}}^{+}\}$ induces a $P_{3}\cup kP_{1}$, a contradiction.
	\end{proof}
	
	\noindent{\bf Claim 4.} For any vertex $w\in V(P)$, $|N(w)\cap H_{P}^{+}|\le 1$ or $|N(w)\cap H_{P}^{+}|\ge h-k+\ell$.
	\begin{proof}
	
	Suppose to the contrary that $2\le |N(w)\cap H_{P}^{+}|\le h-k+\ell-1$ for some $w\in V(P)$. Then $|H_{P}^{+}\setminus (N(w)\cap H_{P}^{+})|\ge h-1+\ell -(h-k+\ell-1)=k$. Let $x_{a}^{+},x_{b}^{+}\in N(w)\cap H_{P}^{+}$, and let $x_{i_{1}}^{+},x_{i_{2}}^{+},...,x_{i_{k}}^{+}\in H_{P}^{+}\setminus (N(w)\cap H_{P}^{+}) $ be $k$ distinct vertices. Therefore, $\{x_{a}^{+},w,x_{b}^{+}\}\cup \{x_{i_{1}}^{+},x_{i_{2}}^{+},...,x_{i_{k}}^{+}\}$ induces a $P_{3}\cup kP_{1}$, a contradiction.
	\end{proof}

 Let $w_{1}\overrightarrow{P}w_{2}$ be a subpath of $P$. We call  $G[V(w_{1}\overrightarrow{P}w_{2})]$   a $1$-clique if $G[V(w_{1}\overrightarrow{P}w_{2})]$ is a clique and $|N(V(w_{1}\overrightarrow{P}w_{2}))\cap H_{P}^{+}|\le 1$. For any $w\in V(P)$, if $|N(w)\cap H_{P}^{+}|\ge h-k+\ell$, then $w$ is denoted as a $(h-k+\ell)$-vertex. By Claim 3, each vertex in $H_{P}$ is $(h-k+\ell)$-vertex.

 \noindent{\bf Claim 5.} Let $w$ be an arbitrary vertex in $P$ with $|N(w)\cap H_{P}^{+}|=0$, and let $w_{1}\overrightarrow{P}w_{2}$ be a subpath of $P$ containing $w$. If each vertex $z\in V(w_{1}\overrightarrow{P}w_{2})$ satisfies $|N(z)\cap H_{P}^{+}|\le 1$, then $|N(V(w_{1}\overrightarrow{P}w_{2}))\cap H_{P}^{+}|=0$ and $G[V(w_{1}\overrightarrow{P}w_{2})]$ is a $1$-clique.
\begin{proof}
	 If $|V(w_{1}\overrightarrow{P}w_{2})|=1$, the conclusion is immediate. If $|V(w_{1}\overrightarrow{P}w_{2})|\ge 2$ and $w^{+}$ exists, we suppose that $|N(w^{+})\cap H_{P}^{+}|= 1$. Without loss of generality, assume that $N(w^{+})\cap H_{P}^{+}= \{x_{a}^{+}\}$. Then $wx_{a}^{+}\notin E(G)$ and $ww^{+}x_{a}^{+}$ is an induced $3$-path. Since $h\ge 2k+1$ and $k\ge 2$, we have $|H_{P}^{+}\setminus ((N(w^{+})\cap H_{P}^{+}) )|= h-1+\ell-1\ge 2k-1\ge k$. Let $x_{i_{1}}^{+},x_{i_{2}}^{+},...,x_{i_{k}}^{+}\in H_{P}^{+}\setminus ((N(w^{+})\cap H_{P}^{+}) )$ be $k$ distinct vertices. Therefore $\{w,w^{+},x_{a}^{+}\}\cup \{x_{i_{1}}^{+},x_{i_{2}}^{+},...,x_{i_{k}}^{+}\}$ induces a $P_{3}\cup kP_{1}$, a contradiction. Hence $|N(w^{+})\cap H_{P}^{+}|=0$. By repeating this argument, it follows that $|N(z)\cap H_P^+|=0$ for every $z\in V(w\overrightarrow{P}w_2)$. Then, $|N(V(w\overrightarrow{P}w_{2}))\cap H_{P}^{+}|=0$. By a symmetric argument, we can obtain that $|N(V(w_{1}\overrightarrow{P}w))\cap H_{P}^{+}|=0$ if $|V(w_{1}\overrightarrow{P}w_{2})|\ge 2$ and $w^{-}$ exists. Thus $|N(V(w_{1}\overrightarrow{P}w_{2}))\cap H_{P}^{+}|=0$.
	
	It remains to show that $G[V(w_{1}\overrightarrow{P}w_{2})]$ is a clique. Suppose not, then $G[V(w_{1}\overrightarrow{P}w_{2})]$ contains a $3$-path $P_{3}=z_{1}z_{2}z_{3}$ with $z_{1}z_{3}\notin E(G)$. Since $|N(V(w_{1}\overrightarrow{P}w_{2}))\cap H_{P}^{+}|=0$, we have $|N(P_{3})\cap H_{P}^{+}|= 0$. Let $x_{i_{1}}^{+},x_{i_{2}}^{+},...,x_{i_{k}}^{+}\in H_{P}^{+}$ be $k$ distinct vertices. Thus, $\{z_{1},z_{2},z_{3}\}\cup \{x_{i_{1}}^{+},x_{i_{2}}^{+},...,x_{i_{k}}^{+}\}$ induces a $P_{3}\cup kP_{1}$, a contradiction. Then $G[V(w_{1}\overrightarrow{P}w_{2})]$ is a clique.
\end{proof}

\noindent{\bf Claim 6.} Let $w$ be an arbitrary vertex in $P$ with $|N(w)\cap H_P^+|=1$, and write $N(w)\cap H_P^+ =\{x_a^+\}$. Let $w_1\overrightarrow{P}w_2$ be a subpath of $P$ containing $w$. If each vertex $z\in V(w_1\overrightarrow{P}w_2)$ satisfies $|N(z)\cap H_P^+|\le 1$, then $N(V(w_{1}\overrightarrow{P}w_{2}))\cap H_{P}^{+}=\{x_a^+\}$ and $G[V(w_1\overrightarrow{P}w_2)]$ is a $1$-clique.
	\begin{proof}
		If $|V(w_1\overrightarrow{P}w_2)|=1$, the conclusion is immediate. If $|V(w_1\overrightarrow{P}w_2)|\ge 2$ and $w^{+}$ exists, we suppose that $N(w^{+})\cap H_P^+ \ne \{x_a^+\}$. Then there exists an induced $3$-path $x_{a}^{+}ww^{+}$ with $x_{a}^{+}w^{+}\notin E(G)$. Since $|N(z)\cap H_P^+|\le 1$ for each vertex $z\in V(w_1\overrightarrow{P}w_2)$, we obtain that $|N(w^{+})\cap H_P^+|\le 1$. Due to $h\ge 2k+1$ and $k\ge 2$, we have $|H_{P}^{+}\setminus ((N(w^{+})\cap H_{P}^{+})\cup\{x_{a}^{+}\} )|\ge h-1+\ell-2\ge 2k-2\ge k$. Let $x_{i_{1}}^{+},x_{i_{2}}^{+},...,x_{i_{k}}^{+}\in H_{P}^{+}\setminus ((N(w^{+})\cap H_{P}^{+})\cup\{x_{a}^{+}\} )$ be $k$ distinct vertices. Then $\{x_{a}^{+},w,w^{+}\}\cup \{x_{i_{1}}^{+},x_{i_{2}}^{+},...,x_{i_{k}}^{+}\}$ induces a $P_{3}\cup kP_{1}$, a contradiction. Thus, $N(w^{+})\cap H_{P}^{+}=\{x_{a}^{+}\}$. By repeating this argument, it follows that $N(z)\cap H_{P}^{+}=\{x_{a}^{+}\}$ for every vertex $z\in V(w\overrightarrow{P}w_{2})$. Then $N(V(w\overrightarrow{P}w_{2}))\cap H_{P}^{+}=\{x_{a}^{+}\}$. By a symmetric argument, we can obtain that $N(V(w_{1}\overrightarrow{P}w))\cap H_{P}^{+}=\{x_{a}^{+}\}$ if $|V(w_{1}\overrightarrow{P}w_{2})|\ge 2$ and $w^{-}$ exists. Thus $N(V(w_{1}\overrightarrow{P}w_{2}))\cap H_{P}^{+}=\{x_{a}^{+}\}$.

		 It remains to show that $G[V(w_{1}\overrightarrow{P}w_{2})]$ is a clique. By a similar analysis of Claim 5, we can obtain that $G[V(w_{1}\overrightarrow{P}w_{2})]$ is a clique.
	\end{proof}
	For any vertex $w\in V(P)$ with $|N(w)\cap H_{P}^{+}|\le 1$, let $w_{L}$ be the vertex $w^{-j_1}$ if there exists a smallest positive integer $j_1$ such that $|N(w^{-j_1})\cap H_P^+|\ge h-k+\ell$, otherwise let $w_{L}=u$. Let $w_{R}$ be the vertex $w^{+j_2}$ if there exists a smallest positive integer $j_2$ such that $|N(w^{+j_2})\cap H_P^+|\ge h-k+\ell$, otherwise let $w_{R}=v$. Set $S = \{w_{L} : w_{L}\neq u\} \cup \{w_{R} : w_{R}\neq v\}$. By the definition of $w_{L}$ and $w_{R}$, we obtain that each vertex $w'\in V(w_{L}\overrightarrow{P}w_{R})\setminus S$ satisfies $|N(w')\cap H_{P}^{+}|\le 1$. Therefore, by Claim 5 and 6, we obtain that $|N(V(w_{L}\overrightarrow{P}w_{R})\setminus S)\cap H_P^+|\le 1$ and $G[V(w_{L}\overrightarrow{P}w_{R})\setminus S]$ is a $1$-clique.

	\noindent{\bf Claim 7.} For any vertex $w\in V(P)$, if $|N(w)\cap H_{P}^{+}|\ge h-k+\ell$ and both $w^{-}$ and $w^{+}$ exist, then $|N(w^{-})\cap H_{P}^{+}|\le 1$ and $|N(w^{+})\cap H_{P}^{+}|\le 1$.
		\begin{proof}
		Suppose that $|N(w^{+})\cap H_{P}^{+}|\ge 2$. By Claim 4, we have $|N(w^{+})\cap H_{P}^{+}|\ge h-k+\ell$. Then $|(N(w)\cap N(w^{+}))\cap H_{P}^{+}|\ge h-2k+\ell+1\ge 2$. Let $x_{a}^{+},x_{b}^{+}\in (N(w)\cap N(w^{+}))\cap H_{P}^{+} $. Without loss of generality, assume that $a<b$. If $x_{a}^{+},x_{b}^{+}\in V(u\overrightarrow{P}w)$ or $x_{a}^{+},x_{b}^{+}\in V(w\overrightarrow{P}v)$, since $wx_{a}^{+},w^{+}x_{b}^{+}\in E(G)$, we have $x_{b}^{+}\in N(w^{+})\cap (H_P^+\cap (V(x_a^+\overrightarrow{P}w)\setminus \{x_a^+\}))$ or $x_{b}^{+}\in N(w^{+})\cap (H_{P}^{+}\cap ( V(x_{a}^{+}\overrightarrow{P}v)\setminus \{x_{a}^{+}\}))$, which is a contradiction to Claim 2. If $x_{a}^{+}\in V(u\overrightarrow{P}w)$ and $x_{b}^{+}\in V(w\overrightarrow{P}v)$, since $wx_{b}^{+},w^{+}x_{a}^{+}\in E(G)$, we have $x_{a}^{+}\in N(w^{+})\cap (H_{P}^{+}\cap  V(u\overrightarrow{P}w))$, which is also a contradiction to Claim 2. By a symmetric argument, we can obtain that $|N(w^{-})\cap H_{P}^{+}|\le 1$.
	\end{proof}
	
		Let $\mathcal{H}$ be the set of all maximal  $1$-cliques and $V_{h-k+\ell}$ be the set of all $(h-k+\ell)$-vertices on $P$. 
	By the proofs of Claim 5-7, we can conclude that maximal $1$-clique and $(h-k+\ell)$-vertex are alternate on $P$.

	\noindent{\bf Claim 8.} Let $H_{i_1}^{j_1}$ and $H_{i_2}^{j_2}$ be any two distinct maximal 1-cliques in $\mathcal{H}$. If at most one of $H_{i_{1}}^{j_{1}}$ and $H_{i_{2}}^{j_{2}}$ contains $u$ or $v$, then there is no edge between $H_{i_{1}}^{j_{1}}$ and $H_{i_{2}}^{j_{2}}$.

	\begin{proof}
		Suppose to the contrary that there exists an edge $e=wz$ between $H_{i_{1}}^{j_{1}}$ and $H_{i_{2}}^{j_{2}}$. Without loss of generality, assume that $w\in V(H_{i_{1}}^{j_{1}})$, $z\in V(H_{i_{2}}^{j_{2}}) $ and $H_{i_{1}}^{j_{1}}$ precedes $H_{i_{2}}^{j_{2}}$ along the orientation of $P$. By the definition of $1$-clique, we have $|N(H_{i_{1}}^{j_{1}})\cap H_{P}^{+}|\le 1$ and $|N(H_{i_{2}}^{j_{2}})\cap H_{P}^{+}|\le 1$. 
		
		Then we can conclude that $G[V(H_{i_{1}}^{j_{1}}\cup H_{i_{2}}^{j_{2}})]$ is a clique. Otherwise, there exists a $3$-path $w_{1}w_{2}w_{3}$ in $G[V(H_{i_{1}}^{j_{1}}\cup H_{i_{2}}^{j_{2}})]$. Since $|N(H_{i_{1}}^{j_{1}})\cap H_{P}^{+}|\le 1$ and $|N(H_{i_{2}}^{j_{2}})\cap H_{P}^{+}|\le 1$, we have $|N(\{w_{1},w_{2},w_{3}\})\cap H_{P}^{+}|\le 2$. Since $h\ge 2k+1$ and $k\ge 2$, we may let $x_{i_{1}}^{+},x_{i_{2}}^{+},...,x_{i_{k}}^{+}\in H_{P}^{+}\setminus (N(\{w_{1},w_{2},w_{3}\})\cap H_{P}^{+})$ be $k$ distinct vertices. Hence, $\{w_{1},w_{2},w_{3}\}\cup \{x_{i_{1}}^{+},x_{i_{2}}^{+},...,x_{i_{k}}^{+}\}$ induces a $P_{3}\cup kP_{1}$, a contradiction.
		
		Let $u_{1},u_{2}$ be two vertices of $H_{i_{1}}^{j_{1}}$ such that $|N(u_{1}^{-})\cap H_{P}^{+}|\ge h-k+\ell$ and $|N(u_{2}^{+})\cap H_{P}^{+}|\ge h-k+\ell$. Note that $u_{1}^{-}$ does not exist if $u\in V(H_{i_{1}}^{j_{1}})$. Also, let $v_{1},v_{2}$ be two vertices of $H_{i_{2}}^{j_{2}}$ such that $|N(v_{1}^{-})\cap H_{P}^{+}|\ge h-k+\ell$ and $|N(v_{2}^{+})\cap H_{P}^{+}|\ge h-k+\ell$. Note that $v_{2}^{+}$ does not exist if $v\in V(H_{i_{2}}^{j_{2}})$.
		
		If $v_{2}^{+}$ exists, then $|(N(v_{2}^{+})\cap N(u_{2}^{+}))\cap H_{P}^{+}|\ge h-2k+\ell+1\ge 2$. Let $x_{a}^{+},x_{b}^{+}\in (N(v_{2}^{+})\cap N(u_{2}^{+}))\cap H_{P}^{+} $. Without loss of generality, assume that $a<b$. Let $y_{a},y_{b}\in V(H)$ with $x_{a}y_{a},x_{b}y_{b}\in E(G)$, and let $P_{ab}$ be a $(y_{a},y_{b})$-path in $H$. Since $G[V(H_{i_{1}}^{j_{1}}\cup H_{i_{2}}^{j_{2}})]$ is a clique, there exists a Hamiltonian path $P_{u_{1}v_{1}}$ of $G[V(H_{i_{1}}^{j_{1}}\cup H_{i_{2}}^{j_{2}})]$ from $u_{1}$ to $v_{1}$. Note that we let $u_{1}=u$ if $u\in V(H_{i_{1}}^{j_{1}})$.
		
		 When $x_{a}^{+},x_{b}^{+}\in V(u\overrightarrow{P}u_{2} )$, then $u\overrightarrow{P}x_{a}y_{a}P_{ab}y_{b}x_{b}\overleftarrow{P}x_{a}^{+}u_{2}^{+}\overrightarrow{P}v_{1}P_{u_{1}v_{1}}u_{1}\overleftarrow{P}x_{b}^{+}v_{2}^{+}\overrightarrow{P}v$ is a longer $(u,v$)-path than $P$, a contradiction.
		 
		  When $x_{a}^{+},x_{b}^{+}\in V(u_{2}^{+}\overrightarrow{P}v_{1}^{-})$, then $u\overrightarrow{P}u_{1}P_{u_{1}v_{1}}v_{1}\overleftarrow{P}x_{b}^{+}u_{2}^{+}\overrightarrow{P}x_{a}y_{a}P_{ab}y_{b}x_{b}\overleftarrow{P}x_{a}^{+}v_{2}^{+}\overrightarrow{P}v$ is a longer $(u,v)$-path than $P$, a contradiction. 
		  
		  When $x_{a}^{+},x_{b}^{+}\in V(v_{1} \overrightarrow{P}v)$, then $u\overrightarrow{P}u_{1}P_{u_{1}v_{1}}v_{1}\overleftarrow{P}u_{2}^{+}x_{a}^{+}\overrightarrow{P}x_{b}\allowbreak y_{b}P_{ab}y_{a}x_{a}\overleftarrow{P}v_{2}^{+}x_{b}^{+}\overrightarrow{P}v$ is a longer $(u,v$)-path than $P$, a contradiction. 
		  
		  When $x_{a}^{+}\in V(u\overrightarrow{P}u_{2} )$ and $x_{b}^{+}\in V(u_{2}^{+}\overrightarrow{P}v_{1}^{-})$, then $u\overrightarrow{P}x_{a}y_{a}P_{ab}y_{b}x_{b}\overleftarrow{P}u_{2}^{+}x_{a}^{+}\overrightarrow{P}u_{1}P_{u_{1}v_{1}}v_{1}\overleftarrow{P}x_{b}^{+}\allowbreak v_{2}^{+}\overrightarrow{P}v$ is a longer $(u,v$)-path than $P$, a contradiction. 
		  
		  When $x_{a}^{+}\in V(u\overrightarrow{P}u_{2} )$ and $x_{b}^{+}\in V(v_{1} \overrightarrow{P}v)$, then $u\overrightarrow{P}x_{a}y_{a}P_{ab}y_{b}x_{b}\overleftarrow{P}v_{2}^{+}\allowbreak x_{a}^{+}\overrightarrow{P}u_{1}P_{u_{1}v_{1}}v_{1}\overleftarrow{P}u_{2}^{+}x_{b}^{+}\allowbreak\overrightarrow{P}v$ is a longer $(u,v$)-path than $P$, a contradiction. 
		  
		  When $x_{a}^{+}\in V(u_{2}^{+}\overrightarrow{P}v_{1}^{-})$ and $x_{b}^{+}\in V(v_{1} \overrightarrow{P}v)$, then $u\overrightarrow{P}u_{1}P_{u_{1}v_{1}}v_{1}\overleftarrow{P}x_{a}^{+}u_{2}^{+}\overrightarrow{P}x_{a}y_{a}P_{ab}y_{b}x_{b}\overleftarrow{P}v_{2}^{+}\allowbreak x_{b}^{+}\overrightarrow{P}v$ is a longer $(u,v$)-path than $P$, a contradiction.
		
		If $v_{2}^{+}$ does not exist, then $v\in V(H_{i_{2}}^{j_{2}})$ and $u\notin V(H_{i_{1}}^{j_{1}})$. Since $G[V(H_{i_{1}}^{j_{1}}\cup H_{i_{2}}^{j_{2}})]$ is a clique, there exists a Hamiltonian path $P_{u_{2}v}$ of $G[V(H_{i_{1}}^{j_{1}}\cup H_{i_{2}}^{j_{2}})]$ from $u_{2}$ to $v$ or there exists a Hamiltonian path $P_{u_{1}v}$ of $G[V(H_{i_{1}}^{j_{1}}\cup H_{i_{2}}^{j_{2}})]$ from $u_{1}$ to $v$. Since $|N(u_{1}^{-})\cap H_{P}^{+}|\ge h-k+\ell$ and $|N(v_{1}^{-})\cap H_{P}^{+}|\ge h-k+\ell$, it follows that $|(N(u_{1}^{-})\cap N(v_{1}^{-}))\cap H_{P}^{+}|\ge h-2k+\ell+1\ge 2$. Let $x_{c}^{+},x_{d}^{+}\in (N(u_{1}^{-})\cap N(v_{1}^{-}))\cap H_{P}^{+} $. Without loss of generality, assume that $c<d$. Let $y_{c},y_{d}\in V(H)$ with $x_{c}y_{c},x_{d}y_{d}\in E(G)$, and let $P_{cd}$ be a $(y_{c},y_{d})$-path in $H$.
		
		 When $x_{c}^{+},x_{d}^{+}\in V(u\overrightarrow{P}u_{2} )$, then $u\overrightarrow{P}x_{c}y_{c}P_{cd}y_{d}x_{d}\overleftarrow{P}x_{c}^{+}u_{1}^{-}\overleftarrow{P}x_{d}^{+}v_{1}^{-}\overleftarrow{P}u_{2}P_{u_{2}v}v$ is a longer $(u,v$)-path than $P$, a contradiction.
		 
		  When $x_{c}^{+},x_{d}^{+}\in V(u_{2}^{+}\overrightarrow{P}v)$, then $u\overrightarrow{P}u_{1}^{-}x_{d}^{+}\overrightarrow{P}v_{1}^{-}x_{c}^{+}\overrightarrow{P}x_{d}y_{d}P_{cd}y_{c}x_{c}\overleftarrow{P}u_{2}P_{u_{2}v}v$ is a longer $(u,v$)-path than $P$, a contradiction. 
		  
		  When $x_{c}^{+}\in V(u\overrightarrow{P}u_{2} )$, $x_{d}^{+}\in V(u_{2}^{+}\overrightarrow{P}v)$ and $(N(u_{2}^{+})\cap H_{P}^{+})\cap V(u_{2}^{+}\overrightarrow{P}v  )\ne \emptyset$, we let $x_{e}^{+}\in (N(u_{2}^{+})\cap H_{P}^{+})\cap V(u_{2}^{+}\overrightarrow{P}v )$. Let $y_{e}\in V(H)$ with $x_{e}y_{e}\in E(G)$, and let $P_{ce}$ be a $(y_{c},y_{e})$-path in $H$. Thus, $u\overrightarrow{P}x_{c}y_{c}P_{ce}y_{e}x_{e}\overleftarrow{P}u_{2}^{+}x_{e}^{+}\overrightarrow{P}v_{1}^{-}x_{c}^{+}\overrightarrow{P}u_{1}P_{u_{1}v}v$ is a longer $(u,v$)-path than $P$, a contradiction. 
		  
		  When $x_{c}^{+}\in V(u\overrightarrow{P}u_{2} )$, $x_{d}^{+}\in V(u_{2}^{+}\overrightarrow{P}v )$ and $(N(u_{2}^{+})\cap H_{P}^{+})\cap V(u_{2}^{+}\overrightarrow{P}v )= \emptyset$, then $|(N(u_{2}^{+})\cap H_{P}^{+})\cap V(u\overrightarrow{P}u_{2})|\ge h-k+\ell\ge 3 $. Let $x_{e}^{+}\in (N(u_{2}^{+})\cap H_{P}^{+})\cap V(u\overrightarrow{P}u_{2})$ such that $e\ne c$. Let $y_{e}\in V(H)$ with $x_{e}y_{e}\in E(G)$, and let $P_{ce}$ be a $(y_{c},y_{e})$-path in $H$. Thus, $u\overrightarrow{P}x_{c}y_{c}P_{ce}y_{e}x_{e}\overleftarrow{P}x_{c}^{+}v_{1}^{-}\overleftarrow{P}u_{2}^{+}\allowbreak x_{e}^{+} \overrightarrow{P}u_{1}P_{u_{1}v}v$ or $u\overrightarrow{P}x_{e}y_{e}P_{ce}y_{c}x_{c}\overleftarrow{P}x_{e}^{+}u_{2}^{+}\overrightarrow{P}v_{1}^{-}x_{c}^{+}\overrightarrow{P}u_{1}P_{u_{1}v}v$ is a longer $(u,v$)-path than $P$, a contradiction.
	\end{proof}

\noindent{\bf Claim 9.} Let $H_{i_1}^{j_1}$ and $H_{i_2}^{j_2}$ be any two distinct maximal 1-cliques in $\mathcal{H}$. If at most one of $H_{i_{1}}^{j_{1}}$ and $H_{i_{2}}^{j_{2}}$ contains $u$ or $v$, then $H_{i_{1}}^{j_{1}}$ and $H_{i_{2}}^{j_{2}}$ are contained in distinct components of $G-V_{h-k+\ell}$.
\begin{proof}
	Suppose to the contrary that $H_{i_{1}}^{j_{1}}$ and $H_{i_{2}}^{j_{2}}$ are contained in the same component of $G-V_{h-k+\ell}$. Without loss of generality, assume that $H_{i_1}^{j_{1}}$ precedes $H_{i_2}^{j_{2}}$ along the orientation of $P$. By Claim 8, we obtain that there exists a component $H'$ of $G-P$ such that $H'$ connects $H_{i_1}^{j_{1}}$ and $H_{i_2}^{j_{2}}$. Therefore, there exists at least one edge between $H'$ and $H_{i_1}^{j_{1}}$, and at least one edge between $H'$ and $H_{i_2}^{j_{2}}$. Let $w\in V(H_{i_{1}}^{j_{1}})$ and $z\in V(H_{i_{2}}^{j_{2}})$ such that $N(w)\cap V(H')\ne \emptyset $ and $N(z)\cap V(H')\ne \emptyset $. And let $P_{wz}$ be a shortest $(w,z)$-path whose all internal vertices belong to $H'$. Without loss of generality, assume that $P_{wz}=wz_{1}z_{2}... z_{s}z$. By the shortest path property of $P_{wz}$, we obtain that $wz_{1}z_{2}$ is an induced $3$-path. By the definition of $1$-clique, we have $|N(w)\cap H_{P}^{+}|\le 1$ and $|N(z)\cap H_{P}^{+}|\le 1$. And we can conclude that $|(N(z_{1})\cup N(z_{2}))\cap H_{P}^{+}|\ge h-k+\ell-1\ge k\ge 2$. Otherwise, $|(N(z_{1})\cup N(z_{2}))\cap H_{P}^{+}|\le h-k+\ell-2$. Then $|H_{P}^{+}\setminus (((N(z_{1})\cup N(z_{2}))\cap H_{P}^{+})\cup (N(w)\cap H_{P}^{+}))|\ge h-(h-k+\ell-2)-1\ge k $. Let $x_{i_{1}}^{+},x_{i_{2}}^{+},...,x_{i_{k}}^{+}\in H_{P}^{+}\setminus (((N(z_{1})\cup N(z_{2}))\cap H_{P}^{+})\cup (N(w)\cap H_{P}^{+}))$ be $k$ distinct vertices. Therefore, $\{w,z_{1},z_{2}\}\cup \{x_{i_{1}}^{+},x_{i_{2}}^{+},...,x_{i_{k}}^{+}\}$ induces a $P_{3}\cup kP_{1}$, a contradiction. Let $x_{a}^{+},x_{b}^{+}\in (N(z_{1})\cup N(z_{2}))\cap H_{P}^{+}$ be two distinct vertices. Without loss of generality, assume $a<b$. Let $y_{a},y_{b}\in V(H)$ with $x_{a}y_{a},x_{b}y_{b}\in E(G)$, and let $P_{ab}$ be a $(y_{a},y_{b})$-path in $H$. Since $|N(w)\cap H_{P}^{+}|\le 1 $, $|N(z)\cap H_{P}^{+}|\le 1$, $\{w ,z\}\subseteq N_{P}(H')$ and $H_{P}\subseteq V_{h-k+\ell}$, we have $H\ne H'$. Hence, there exists an $(x_{a}^{+},x_{b}^{+})$-path $P_{H'}$ whose all internal vertex belong to $H'$ and are disjoint from $V(P_{ab})$. Therefore, $u\overrightarrow{P}x_{a}y_{a}\allowbreak P_{ab}y_{b}x_{b}\overleftarrow{P}x_{a}^{+}P_{H'}x_{b}^{+}\overrightarrow{P}v$ is a longer $(u,v)$-path than $P$, a contradiction.
\end{proof}

	If $u$ and $v$ are contained in two distinct maximal $1$-cliques of $\mathcal{H}$, then $|\mathcal{H}|=|V_{h-k+\ell}|+1$. By Claim 9, $u$ and $v$ may be contained in the same component of $G-V_{h-k+\ell}$. Then $c(G-V_{h-k+\ell})\ge |\mathcal{H}|-1+1=|\mathcal{H}|$. Therefore, $\tau (G)\le \frac{|V_{h-k+\ell}|}{c(G-V_{h-k+\ell})}\le \frac{|\mathcal{H}|-1}{|\mathcal{H}|}<1$, a contradiction.
	
	If one of $u$ and $v$ is contained in a $1$-clique, then $|\mathcal{H}|=|V_{h-k+\ell}|$. By Claim 9, any two maximal $1$-cliques of $\mathcal{H}$ are contained in distinct components of $G-V_{h-k+\ell}$. Then $c(G-V_{h-k+\ell})\ge |\mathcal{H}|+1$. Therefore, $\tau (G)\le \frac{|V_{h-k+\ell}|}{c(G-V_{h-k+\ell})}\le \frac{|\mathcal{H}|}{|\mathcal{H}|+1}<1$, a contradiction.
	
	If both $u$ and $v$ are not contained in any $1$-cliques, i.e., $u$ and $v$ are $(h-k+\ell)$-vertices, then $|\mathcal{H}|+1=|V_{h-k+\ell}|$. In this situation, any $1$-clique does not contain $u$ or $v$. By Claim 9, any two maximal $1$-cliques of $\mathcal{H}$ are contained in distinct components of $G-V_{h-k+\ell}$. Then $c(G-V_{h-k+\ell})\ge |\mathcal{H}|+1$. Therefore, $\tau (G)\le \frac{|V_{h-k+\ell}|}{c(G-V_{h-k+\ell})}\le \frac{|\mathcal{H}|+1}{|\mathcal{H}|+1}=1$, a contradiction.$\hfill\Box$

\end{document}